\providecommand{\U}[1]{\protect\rule{.1in}{.1in}}
\newtheorem{thm}{Theorem}[section]
\newtheorem{prop}[thm]{Proposition \!\!}
\newtheorem{cor}[thm]{Corollary \!\!}
\newtheorem{lem}[thm]{Lemma \!\!}
\newtheorem{definition}[thm]{Definition}
\newcommand{\h}{\mathcal H}
\newcommand{\C}{\mathbb C}
\newcommand{\w}{\wedge}
\title{Spectral mapping theorem and the Taylor spectrum}
\author{Muneo Ch\=o, B. Na\v cevska Nastovska and K\^otar\^o Tanahashi }
\date{}
\begin{document}

\maketitle \footnotetext{2010 Mathematics Subject Classification;
Primary 47B20, Secondary 47A10.\par Keywords; Helbert space, Taylor spectrum, spectral mapping theorem, $p$-hyponormal, $\log$-hyponormal.\par  }

\begin{abstract} In \cite{CT} Ch\=o and Tanahashi showed new spectral mapping theorem of the Taylor spectrum for doubly commuting pairs 
of $p$-hyponormal operators and $\log$-hyponormal operators. In this paper,  we will show that same spectral mapping theorem holds for commuting $n$-tuples.
\end{abstract}

\

\

\section{Introduction and preparation }

Let $\h$ be a complex Hilbert space and $B(\h)$ be the set of all bounded linear operators on $\h$. For $T \in B(\h)$, let $\sigma(T), \sigma_p(T)$  and  $\sigma_a(T)$ denote the spectrum, the point spectrum and  the approximate point spectrum of  $T$, respectively. Let $\lambda \in \C$ belong to the residual spectrum  $\sigma_{r}(T)$ of $T$ if there exists $ c>0$ such that $ \Vert (T- \lambda) x \Vert \geq c \Vert x \Vert $ for all $x \in {\mathcal H}$ and $ (T-\lambda) {\mathcal H} \not= {\mathcal H}$.  It is easy to see that if  $\lambda \in \sigma_r(T)$, then $0 \in \sigma_p((T-\lambda)^*)$.  It is well known that $\sigma(T) = \sigma_a(T) \cup \sigma_r(T)$. For an Hermitian operator $A \in B(\h)$, we denote $A \geq 0$ if $(Ax, x) \geq 0$ for every $x \in \h$ and $A \geq B$ if $A - B \geq 0$. When $(Ax, x) > 0$ for every non-zero $x \in \h$, then we denote $T > 0$. For a given $p>0$, $T \in B(\h)$ is said to be $p$-hyponormal if  $(T^*T)^p \geq (TT^*)^p$. When $p=1/2$, $T$ is said to be semi-hyponormal. It means that  $T$ is semi-hyponormal if and only if $|T| \geq |T^*|$.  $T$ is said to be $\log$-hyponormal if  $T$ is invertible and $\log |T| \geq \log |T^*|$. It is well known that if $T$ is invertible $p$-hyponormal for some $ p>0$, then $T$ is $\log$-hyponormal. 
If ${\mathcal M}$ is a reducing subspace for a $p$-hyponormal or $\log$-hyponormal operator $T$, then so is  $T|_{{\mathcal M}}$, respectively.

\bigskip

For a commuting $n$-tuple ${\bf T} = (T_1,...,T_n) \in B(\h)^n$, we explain the Taylor spectrum  $\sigma({\bf T})$ of ${\bf T}$ shortly. Let $E^n$ be the exterior algebra on $n$ generators, that is, $E^n$ is the complex algebra with identity $e$ generated by indeterminates $e_1,..., e_n$. Let $E^n_k(\h) = \h \otimes E^n_k$.  Define $D^n_k \ : \ E^n_k(\h) \ \longrightarrow \ E^n_{k-1}(\h)$ by

$$ D^n_k (x \otimes e_{j_1} \w \cdots \w e_{j_k}) :=\sum_{i =1}^k (-1)^{i-1} T_{j_i} x \otimes e_{j_1} \w \cdots 
\w \check{e}_{j_i} \w \cdots \w e_{j_k}, $$ 
where $\check{e}_{j_i}$ means deletion. We denote $ D^n_k $ by $D_k$ simply. 
We think Koszul complex $E({\bf T})$ of ${\bf T}$ as follows: 

\

$ \ \ \ \ E({\bf T}) \ : \ 0 \longrightarrow E^n_n(\h) \stackrel{D_n}{\longrightarrow} E^n_{n-1}(\h) \ \stackrel{D_{n-1}}{ \longrightarrow} \ \cdots \stackrel{D_2}{\longrightarrow} E^n_1(\h) \stackrel{D_1}{\longrightarrow} E^n_0(\h) \longrightarrow 0.$ 
\vspace{2mm}

\noindent
Since $E^n_k(\h) \ \cong \  \overbrace{\h \oplus  \cdots \oplus \h}^{\binom{n}{k}=\frac{n !}{ (n-k) ! \, k !}}  \ \  (k=1,...,n),$ we set $E^n_k(\h) \ = \  \overbrace{\h \oplus  \cdots \oplus \h}^{\binom{n}{k}}  \ \  (k=1,...,n).$
\vspace{2mm}

\noindent
\begin{definition} A commuting $n$-tuple ${\bf T} = (T_1,...,T_n) \in B(\h)^n$ is said to be singular if and only if the Koszul complex $E({\bf T})$  of ${\bf T}$  is not exact.
\end{definition}

\noindent
\begin{definition} For a commuting $n$-tuple ${\bf T}= (T_1,...,T_n) \in B(\h)^n$, $z=(z_1,...,z_n) \in \C^n$ belongs to the Taylor spectrum $\sigma_T({\bf T})$ of $\bf{T}$ if ${\bf T} - z = (T_1 - z_1,...,T_n - z_n)$ is singular. 
\end{definition}

\noindent
About the definition of the Taylor spectrum, see details J. L. Taylor \cite{T1} and \cite{T2}. In \cite{Cu}, Curto proved the following proposition.

\begin{prop}\label{PC}{\rm (Proposition 3.4, Curto $\cite{Cu}$) } 
For a commuting $n$-tuple ${\bf T}= (T_1,...,T_n) \in B(\h)^n$, $0 = (0,...,0) \not \in \sigma_T({\bf T})$ if and only if $D_k^*D_k + D_{k+1} D_{k+1}^*$ is invertible for all $k$.

\end{prop}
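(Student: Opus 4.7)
My plan is to treat $\Delta_k := D_k^*D_k + D_{k+1}D_{k+1}^*$ as the Hodge Laplacian of the Koszul complex and use standard Hilbert-complex arguments. The fundamental observation is $(\Delta_k x, x) = \|D_k x\|^2 + \|D_{k+1}^* x\|^2$, so $\Delta_k$ is positive self-adjoint with $\ker \Delta_k = \ker D_k \cap \ker D_{k+1}^*$. This identity drives both implications of the proposition.

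For sufficiency, I would assume every $\Delta_k$ is invertible and take $x \in \ker D_k$. Writing $y = \Delta_k^{-1}x$ and applying $D_k$ to $D_k^*D_k y + D_{k+1}D_{k+1}^*y = x$, the Koszul complex identity $D_k D_{k+1} = 0$ together with $D_k x = 0$ leaves $D_k D_k^*D_k y = 0$; pairing with $D_k y$ forces $D_k^* D_k y = 0$, and so $x = D_{k+1}(D_{k+1}^* y) \in \operatorname{Im} D_{k+1}$. Since the reverse inclusion $\operatorname{Im} D_{k+1} \subseteq \ker D_k$ is automatic, the complex is exact at every level and $0 \notin \sigma_T({\bf T})$.

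For necessity, assume exactness. Then $\operatorname{Im} D_{k+1} = \ker D_k$ is closed for every $k$, and therefore each $D_k$ (and each adjoint) has closed range. Injectivity of $\Delta_k$ follows easily: any $x \in \ker D_k \cap \ker D_{k+1}^*$ has the form $x = D_{k+1}w$ by exactness, and $\|x\|^2 = (D_{k+1}w, x) = (w, D_{k+1}^* x) = 0$. The main obstacle is surjectivity, which I would handle via the orthogonal decomposition $E^n_k(\h) = \ker D_k \oplus \operatorname{Im} D_k^*$. On $\ker D_k$ the summand $D_k^*D_k$ of $\Delta_k$ vanishes, while on $\operatorname{Im} D_k^*$ the summand $D_{k+1}D_{k+1}^*$ vanishes since $D_{k+1}^* D_k^* = (D_k D_{k+1})^* = 0$; thus $\Delta_k$ is block-diagonal with respect to this decomposition. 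Applying the open mapping theorem to the bounded bijections $D_{k+1} : (\ker D_{k+1})^\perp \to \ker D_k$ and $D_k : (\ker D_k)^\perp \to \operatorname{Im} D_k$ (both bijections by closed range combined with exactness), each block of $\Delta_k$ is invertible on its invariant subspace, so $\Delta_k$ is invertible on the whole space.
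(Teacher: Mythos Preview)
Your argument is correct. Note, however, that the paper does not supply its own proof of this proposition: it is quoted verbatim as Proposition~3.4 of Curto~\cite{Cu} and used as a black box, so there is no in-paper proof to compare against. What you have written is precisely the standard Hodge-Laplacian argument that underlies Curto's original result: the identity $(\Delta_k x,x)=\|D_k x\|^2+\|D_{k+1}^*x\|^2$ gives $\ker\Delta_k=\ker D_k\cap\ker D_{k+1}^*$, exactness forces all ranges to be closed (since $\operatorname{Im}D_{k+1}=\ker D_k$ is closed), and the block-diagonal splitting of $\Delta_k$ along $E^n_k(\h)=\ker D_k\oplus\operatorname{Im}D_k^*$ reduces invertibility to the open mapping theorem on each block. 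The only places worth tightening are purely expository: make explicit the endpoint conventions $D_0=0$, $D_{n+1}=0$ so that $\Delta_0=D_1D_1^*$ and $\Delta_n=D_n^*D_n$ fall under the same argument, and state once that closed range of $D_k$ (hence of $D_k^*$) is exactly what guarantees $(\ker D_k)^\perp=\operatorname{Im}D_k^*$ rather than merely its closure.
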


\

For a commuting pair ${\bf T} = (T_1,...,T_n) \in  B(\h)^n$, it is well known that, for polynomials $f_1,...,f_m$ of $n$-variables, if $f(z_1,...,z_n)=(f_1(z_1,..., z_n),...,f_m(z_1,...,z_n))$, then it holds 
$$\sigma_T(f(T_1,...,T_n)) = f(\sigma_T(T_1,...,T_n)), $$
where $\sigma_T(T_1,...,T_n)$ is the Taylor spectrum of ${\bf T} = (T_1,...,T_n).$ See Theorem 4.7 in $\cite{T2}$.

\

In the paper \cite{CT}, Ch\=o and Tanahashi showed another spectral mapping theorem under the following assumption. \\

\noindent
Let $T=U|T| \in B(\h)$ be the polar decomposition of $T$ with unitary $U$ and $f$ be a continuous function on the non-negative real line which contains $\sigma(|T|)$. 
Let ${\mathcal K}$ be Berberian extension of $\h$ and $\circ : B(\h) \ni T \rightarrow T^{\circ} \in B({\mathcal K})$ be a faithful $*$-representation. We set the following conditions (1) and (2):

\begin{align}  & \text{For a sequence } \{x_n\} \text{ of unit vectors, if } \, (T- z) x_n \rightarrow 0, \ \text{then} \  (T - z)^* x_n \rightarrow 0. \\
                 & \text{If a closed subspace } \ {\mathcal M} \ \text{ of }  \ {\mathcal K} \ \text{reduces  } \ T^{\circ} \ \text{ and} \   re^{i\theta} \in
  \sigma(T^{\circ} \vert_{\mathcal M}),   \\
 & \qquad \text{ then } \  {\mathcal M} \ \text{ reduces  } \ U^{\circ}, \vert T \vert^{\circ} \ \text{and} \ 
 e^{-i \theta} f(r) \in \sigma_{p} \left(  ( U^{\circ} \vert_{\mathcal M} f( \vert T \vert^{\circ} ) 
\vert_{\mathcal M} )^{*} \right). \notag
 \end{align}

\begin{thm} \label{thm1} 

Let  ${\bf T} = (T_1,T_2)$ be a doubly commuting pair of operators and $T_j= U_j|T_j| \ (j=1,2)$ be the polar decomposition. Let $f(t)$ be a continuous function on a open interval in the non-negative real line which contains $\sigma(|T_1|) \cup \sigma(|T_2|)$. Let $S_j = U_j f( |T_j|) \ (j=1,2)$ and  ${\bf S} = (S_1,S_2)$. Let $T_1, T_2$ and $f$ satisfy $(1)$ and $(2)$. If  $  (r_1 e^{i \theta_1},r_2e^{i \theta_2}) \in \sigma_T({\bf T}), \ \mbox{then }  \  (e^{i \theta_1} f(r_1), e^{i \theta_2} f(r_2))  \in \sigma_T({\bf S}) . $

\end{thm}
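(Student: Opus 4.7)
The plan is to apply Curto's Proposition \ref{PC} to reformulate the hypothesis $\lambda = (\lambda_1,\lambda_2) := (r_1 e^{i\theta_1}, r_2 e^{i\theta_2}) \in \sigma_T({\bf T})$ as the non-invertibility of $D_k^*D_k + D_{k+1}D_{k+1}^*$ (formed from the Koszul complex of ${\bf T}-\lambda$) for some $k \in \{0,1,2\}$. My target is to produce a unit vector $x \in \mathcal{K}$ in the Berberian space satisfying $T_j^\circ x = \lambda_j x$ and $(T_j^\circ)^* x = \bar{\lambda}_j x$ for $j=1,2$. Taking then the one-dimensional subspace $\mathcal{M} := \mathbb{C} x$ and applying condition $(2)$ separately to each $T_j$, I will conclude $S_j^\circ x = e^{i\theta_j} f(r_j) x$. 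The resulting non-zero joint eigenvector forces the Koszul differential $D_2$ of the complex for ${\bf S}^\circ - (e^{i\theta_1} f(r_1), e^{i\theta_2} f(r_2))$ to have nontrivial kernel, so Curto's proposition places $(e^{i\theta_1} f(r_1), e^{i\theta_2} f(r_2))$ in $\sigma_T({\bf S}^\circ)$, and preservation of the Taylor spectrum under the faithful $*$-representation $\circ$ transfers this to $\sigma_T({\bf S})$.

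To extract $x$, I would carry out a case analysis on $k$. Writing $A_j := T_j - \lambda_j$, the extreme cases $k=0$ and $k=2$ immediately yield a unit sequence $\{x_n\} \subset \mathcal{H}$ with $A_j^* x_n \to 0$ or $A_j x_n \to 0$, respectively, for $j=1,2$. The key computation is the middle case $k=1$: the matrix of $D_1^* D_1 + D_2 D_2^*$ on $\mathcal{H} \oplus \mathcal{H}$ carries off-diagonal entries $A_1^* A_2 - A_2 A_1^*$, which vanish by double commutativity ($T_1 T_2^* = T_2^* T_1$ implies $A_1^* A_2 = A_2 A_1^*$). The operator is therefore block diagonal, $\mathrm{diag}(A_1^* A_1 + A_2 A_2^*,\, A_2^* A_2 + A_1 A_1^*)$, and non-invertibility of one block produces a mixed sequence, say $A_1 x_n \to 0$ and $A_2^* x_n \to 0$. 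Condition $(1)$ applied to $T_1$ then strengthens this to $A_1^* x_n \to 0$ as well, and the analogous argument covers the dual subcase. In every case I aim to end with a single unit sequence $\{x_n\}$ for which $A_j x_n \to 0$ and $A_j^* x_n \to 0$ hold simultaneously for both $j=1,2$; passing to the Berberian extension then produces the required $x \in \mathcal{K}$.

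With $x$ in hand, $\mathcal{M} = \mathbb{C} x$ reduces each $T_j^\circ$ and $\lambda_j = r_j e^{i\theta_j} \in \sigma(T_j^\circ|_\mathcal{M})$ trivially. Condition $(2)$ then says $\mathcal{M}$ also reduces $U_j^\circ$ and $|T_j|^\circ$, and on this one-dimensional subspace $U_j^\circ|_\mathcal{M}$ and $|T_j|^\circ|_\mathcal{M}$ are scalars; the polar decomposition $T_j = U_j |T_j|$ (with $U_j$ unitary) forces them to equal $e^{i\theta_j}$ and $r_j$, so that $S_j^\circ|_\mathcal{M} = e^{i\theta_j} f(r_j)$, completing the argument. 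The step I expect to be the main obstacle is the $k=0$ case (and, more subtly, one subcase of $k=1$): condition $(1)$ only supplies the implication $(T_j - z) x_n \to 0 \Rightarrow (T_j - z)^* x_n \to 0$, so converting a sequence with $A_j^* x_n \to 0$ into one with $A_j x_n \to 0$ is not automatic. Handling this cleanly will likely require invoking the specific structural properties of $p$-hyponormal and $\log$-hyponormal operators that underlie the abstract conditions $(1)$ and $(2)$, or else replacing $\mathbb{C} x$ with a carefully chosen larger reducing subspace on which the relevant coordinate of $T_j^\circ$ still has $\lambda_j$ in its spectrum.
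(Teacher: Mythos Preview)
Your case analysis on $k$ is correct and matches the paper's Lemma~\ref{lem1}: in every case one can extract a unit sequence $\{x_m\}$ with $(T_j-\lambda_j)^* x_m \to 0$ for $j=1,2$ (using double commutativity for the block-diagonalisation at $k=1$, and condition~(1) to upgrade any occurrence of $(T_j-\lambda_j)x_m\to 0$ to the adjoint version). But the stronger conclusion you want, namely $(T_j-\lambda_j)x_m\to 0$ \emph{as well} for both $j$, genuinely fails in the $k=0$ case and in one $k=1$ subcase, exactly as you flag. Conditions (1) and (2) give no mechanism to reverse the implication, and for $p$-hyponormal or $\log$-hyponormal $T_j$ the reverse implication is simply false in general. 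So the plan of taking $\mathcal M=\mathbb C\,x$ cannot be completed as stated.

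The paper (following \cite{CT} for $n=2$, and visibly in the inductive proof of Theorem~1.9) never attempts to obtain the two-sided eigenvector. It stops at $\overline\lambda\in\sigma_{ja}({\bf T}^*)$, passes to $x^\circ\in\mathcal K$ with $(T_j^\circ-\lambda_j)^*x^\circ=0$, and then takes the \emph{large} subspace $\mathcal M=\ker(T_2^\circ-\lambda_2)^*$. Double commutativity makes $\mathcal M$ reduce $T_1^\circ$, and since $x^\circ\in\mathcal M$ with $(T_1^\circ|_{\mathcal M}-\lambda_1)^*x^\circ=0$ one has $\lambda_1\in\sigma(T_1^\circ|_{\mathcal M})$; now condition~(2) applies and produces $y^\circ\in\mathcal M$ with $(S_1^\circ-e^{i\theta_1}f(r_1))^*y^\circ=0$. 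One then repeats the trick: $\mathcal N=\ker(S_1^\circ-e^{i\theta_1}f(r_1))^*$ reduces $T_2^\circ$ (because $U_1,|T_1|$, hence $S_1$, doubly commute with $T_2$), $y^\circ\in\mathcal N$ witnesses $\lambda_2\in\sigma(T_2^\circ|_{\mathcal N})$, and a second application of~(2) yields $z^\circ\in\mathcal N$ with $(S_2^\circ-e^{i\theta_2}f(r_2))^*z^\circ=0$. Then $z^\circ$ is a joint eigenvector for $(S_1^\circ)^*,(S_2^\circ)^*$, which via Berberian gives $(\overline{e^{i\theta_1}f(r_1)},\overline{e^{i\theta_2}f(r_2)})\in\sigma_{ja}({\bf S}^*)$ and hence the Taylor-spectrum conclusion by Proposition~\ref{PC} at level $k=0$. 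This is precisely the ``larger reducing subspace'' route you gesture at in your last sentence; the point is that condition~(2) is tailored to produce eigenvectors of $(S^\circ|_{\mathcal M})^{*}$, so working throughout with adjoint kernels is the natural (and necessary) choice.
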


See the details of Berberian extension \cite{B}. That proof depends on the following Vasilescu's result.

\   

\noindent
Let ${\bf T} = (T_1,T_2)$ be a commuting pair of operators on $\h$, ${\bf z} = (z_1,z_2) \in \C^2$ and let
$$\alpha({\bf T} -{\bf z}) := \left( \begin{array}{cc}
T_1-z_1 & T_2-z_2 \\
-(T_2-z_2)^* & (T_1 -z_1)^*
\end{array}
\right)  \ \ \mbox{on } \ \h \oplus \h.$$

\

\noindent
Then Vasilescu proved the following result.

\begin{prop} {\rm (Theorem 1.1, Vasilescu $\cite{V}$) } 
Let ${\bf T} = (T_1,T_2) \in B(\h)^2$ be a  commuting pair. Then    
$${\bf z}=(z_1, z_2) \in \sigma_T( {\bf T}) \ \mbox{if and only if }  \   \alpha({\bf T} -{\bf z})  \ \mbox{is not invertible}.$$  
\end{prop}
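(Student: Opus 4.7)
The plan is to use Curto's criterion (Proposition \ref{PC}) to reduce the problem to the simultaneous invertibility of three Koszul positive operators, and then to identify those with $\alpha({\bf T})^{*}\alpha({\bf T})$ and $\alpha({\bf T})\alpha({\bf T})^{*}$. By replacing ${\bf T}$ with ${\bf T}-{\bf z}$, it suffices to treat the case ${\bf z}=(0,0)$, so the goal becomes: $\alpha({\bf T})$ is invertible on $\h\oplus\h$ if and only if $0\notin\sigma_{T}({\bf T})$.

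First I would write down the $n=2$ Koszul maps explicitly. Setting $D_{0}=D_{3}=0$ and identifying $E^{2}_{0}(\h)=\h$, $E^{2}_{1}(\h)=\h\oplus\h$, $E^{2}_{2}(\h)=\h$, the definition gives $D_{1}(x_{1},x_{2})=T_{1}x_{1}+T_{2}x_{2}$ and $D_{2}(x)=(-T_{2}x,\,T_{1}x)$, with $D_{1}D_{2}=0$ equivalent to $T_{1}T_{2}=T_{2}T_{1}$. By Proposition \ref{PC}, $0\notin\sigma_{T}({\bf T})$ is equivalent to simultaneous invertibility of $D_{1}D_{1}^{*}=T_{1}T_{1}^{*}+T_{2}T_{2}^{*}$, of $D_{1}^{*}D_{1}+D_{2}D_{2}^{*}$, and of $D_{2}^{*}D_{2}=T_{1}^{*}T_{1}+T_{2}^{*}T_{2}$.

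The decisive step is a matrix identification. A direct computation shows that $\alpha({\bf T})^{*}\alpha({\bf T})$ coincides, entry by entry, with the $2\times 2$ matrix form of $D_{1}^{*}D_{1}+D_{2}D_{2}^{*}$, while on the other hand
$$\alpha({\bf T})\alpha({\bf T})^{*}=\begin{pmatrix} T_{1}T_{1}^{*}+T_{2}T_{2}^{*} & 0 \\ 0 & T_{1}^{*}T_{1}+T_{2}^{*}T_{2} \end{pmatrix}=(D_{1}D_{1}^{*})\oplus(D_{2}^{*}D_{2}).$$
The off-diagonal cancellation in $\alpha({\bf T})\alpha({\bf T})^{*}$ uses $T_{1}T_{2}=T_{2}T_{1}$ (equivalently $T_{1}^{*}T_{2}^{*}=T_{2}^{*}T_{1}^{*}$) and is the only point where commutativity enters.

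To finish, note that since $\alpha({\bf T})\in B(\h\oplus\h)$, it is invertible if and only if both $\alpha({\bf T})^{*}\alpha({\bf T})$ and $\alpha({\bf T})\alpha({\bf T})^{*}$ are invertible (these encode, respectively, that $\alpha({\bf T})$ is bounded below and that it is surjective). By the two identifications this is equivalent to invertibility of all three Curto operators, and hence by Proposition \ref{PC} to $0\notin\sigma_{T}({\bf T})$. The main obstacle is purely a bookkeeping one: the signs in $D_{2}$ and in the second row of $\alpha({\bf T})$ must be chosen consistently so that both matrix identifications come out right simultaneously; once the sign conventions are locked in, the proof collapses to a routine expansion plus an appeal to Proposition \ref{PC}.
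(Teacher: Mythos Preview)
Your argument is correct. The paper does not supply its own proof of this proposition: it is quoted as Theorem~1.1 of Vasilescu \cite{V} and used as a black box, so there is no ``paper's proof'' to compare against. Your route via Proposition~\ref{PC} is natural and works cleanly; the two identities
\[
\alpha({\bf T})^{*}\alpha({\bf T})=D_{1}^{*}D_{1}+D_{2}D_{2}^{*},\qquad
\alpha({\bf T})\alpha({\bf T})^{*}=(D_{1}D_{1}^{*})\oplus(D_{2}^{*}D_{2})
\]
are exactly right with the sign conventions $D_{1}=\begin{pmatrix}T_{1}&T_{2}\end{pmatrix}$, $D_{2}=\begin{pmatrix}-T_{2}\\ T_{1}\end{pmatrix}$ used in the paper (cf.\ the computation in the proof of Lemma~\ref{lem1}), and the equivalence ``$A$ invertible $\Leftrightarrow$ $A^{*}A$ and $AA^{*}$ invertible'' then reduces everything to Curto's criterion. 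Vasilescu's original argument in \cite{V} predates \cite{Cu} and proceeds by a direct analysis of the exactness of the three-term Koszul complex rather than by invoking Proposition~\ref{PC}, but the content is the same: your proof is essentially a streamlined repackaging of his, made possible by Curto's later formulation.
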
 

\noindent
Therefore, we have  ${\bf z}=(z_1, z_2) \in \sigma_T( {\bf T}) \ \ \mbox{if and only if }  \   \ 0 \in \sigma(\alpha ({\bf T} -{\bf z})). $
\vspace{3mm}

\

\noindent
For an $n$-tuple ${\bf T} = (T_1,...,T_n)$, the joint point spectrum  $\sigma_{jp}({\bf T})$ is the set of all numbers ${\bf z} = (z_1,...,z_n) \in \C^n$ such that there exists a non-zero vector $x \in \h$ which satisfies $T_jx = z_jx \ (\forall j  =1,...,n)$ and the joint approximate point spectrum  $\sigma_{ja}({\bf T})$  is the set of all numbers ${\bf z} = (z_1,...,z_n) \in \C^n$ such that there exists a  sequence $\{x_k \}$ of unit vectors of $\h$ which satisfies
$$(T_j - z_j) x_k \ \to \ 0 \ \mbox{as}  \ k \to \infty \ (\forall j  =1,...,n).$$

\noindent
Following proposition is due to Berberian \cite{B} for a single operator case. It is easy to see a proof for $n$-tuples. See Berberian \cite{B} and Ch\=o \cite{C}.

\begin{prop} Let $B(\h)$ be the set of all bounded linear operators on $\h$. Then there exist an extension space ${\mathcal K}$ of $\h$ and a faithful $*$-representation of $B(\h)$ into $B({\mathcal K}) : \ T \ \to \ T^{\circ}$ such that

$$\sigma_{ja}({\bf T}) = \sigma_{ja}({\bf T}^{\circ}) = \sigma_{jp}({\bf T}^{\circ}), $$
where ${\bf T} = (T_1,...,T_n) \in B(\h)^n$ and ${\bf T}^{\circ} = (T_1^{\circ},...,T_n^{\circ})$.
 \end{prop}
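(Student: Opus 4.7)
The plan is to imitate Berberian's original single-operator construction from \cite{B} and then to chain the inclusions
\[
\sigma_{ja}({\bf T}) \ \subseteq \ \sigma_{jp}({\bf T}^{\circ}) \ \subseteq \ \sigma_{ja}({\bf T}^{\circ}) \ \subseteq \ \sigma_{ja}({\bf T}),
\]
which will force equality throughout. The leftmost inclusion will carry essentially all of the work and is where the joint structure genuinely enters; the rightmost one will follow from a standard single-operator reduction, using that a faithful $*$-representation between $C^{*}$-algebras is isometric and therefore spectrum-preserving.

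First I would construct ${\mathcal K}$ exactly as in the single-operator case: fix a Banach limit $\phi$ on $\ell^{\infty}(\n)$, endow the space of bounded $\h$-valued sequences with the semi-inner product $\langle (x_k),(y_k) \rangle := \phi \bigl( (x_k, y_k) \bigr)$, and pass to the Hausdorff completion. The assignment $T^{\circ}[(x_k)] := [(Tx_k)]$ then defines a norm-preserving $*$-homomorphism $T \mapsto T^{\circ}$ of $B(\h)$ into $B({\mathcal K})$, and $\h$ embeds isometrically into ${\mathcal K}$ via the diagonal map $x \mapsto [(x,x,\dots)]$. No joint structure is needed at this stage.

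Next I would obtain the key inclusion $\sigma_{ja}({\bf T}) \subseteq \sigma_{jp}({\bf T}^{\circ})$ by producing a single joint eigenvector in ${\mathcal K}$. Given ${\bf z} = (z_1, \dots, z_n) \in \sigma_{ja}({\bf T})$ with a witnessing sequence $\{x_k\} \subset \h$ of unit vectors such that $\| (T_j - z_j) x_k \| \to 0$ for every $j$, the class $\tilde{x} := [(x_k)]$ has norm one and
\[
\| ( T_j^{\circ} - z_j ) \tilde{x} \|^{2} \ = \ \phi \bigl( \| (T_j - z_j) x_k \|^{2} \bigr) \ = \ 0 \qquad (j = 1, \dots, n),
\]
because a Banach limit agrees with the ordinary limit on convergent sequences. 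Thus $\tilde{x}$ is a nonzero common eigenvector of ${\bf T}^{\circ}$ at ${\bf z}$; the inclusion $\sigma_{jp}({\bf T}^{\circ}) \subseteq \sigma_{ja}({\bf T}^{\circ})$ is trivial.

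To close the loop I would reduce the joint statement for ${\bf T}^{\circ}$ to a single-operator spectral statement via the positive operator $A_{\bf z} := \sum_{j=1}^{n} (T_j - z_j)^{*}(T_j - z_j)$. From $(A_{\bf z} x, x) = \sum_{j} \| (T_j - z_j) x \|^{2}$ one verifies that ${\bf z} \in \sigma_{ja}({\bf T})$ if and only if $0 \in \sigma(A_{\bf z})$, with the analogous equivalence on the ${\mathcal K}$-side; since $T \mapsto T^{\circ}$ preserves adjoints, products, and norms, it preserves spectra as well, so $\sigma(A_{\bf z}) = \sigma(A_{\bf z}^{\circ})$ and the final inclusion $\sigma_{ja}({\bf T}^{\circ}) \subseteq \sigma_{ja}({\bf T})$ follows. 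The main delicacy, rather than a true obstacle, is the bookkeeping that converts coordinatewise norm control in $\h$ into an honest vector equation in ${\mathcal K}$; once the Banach-limit construction is in place, the rest is essentially a componentwise packaging of Berberian's single-operator theorem.
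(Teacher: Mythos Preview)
Your argument is correct. The paper itself does not prove this proposition; it merely attributes the single-operator case to Berberian \cite{B}, remarks that the extension to $n$-tuples is easy, and refers to \cite{B} and \cite{C} for details. What you have written is precisely the standard fleshing-out of that remark: the Banach-limit quotient space, the diagonal embedding, the observation that a joint approximate eigenvector sequence in $\h$ becomes a genuine joint eigenvector in ${\mathcal K}$, and the closing inclusion via the auxiliary positive operator $A_{\bf z} = \sum_{j}(T_j - z_j)^{*}(T_j - z_j)$ together with spectral permanence for faithful unital $*$-representations. Each step checks out, including the equivalence ${\bf z} \in \sigma_{ja}({\bf T}) \Leftrightarrow 0 \in \sigma(A_{\bf z})$, which uses only that $A_{\bf z}$ is positive so that $(A_{\bf z}x_k,x_k)\to 0$ and $\|A_{\bf z}x_k\|\to 0$ are equivalent along unit vectors.

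Since the paper offers no competing argument, there is nothing substantive to compare; your route is the expected one and would be accepted as the omitted proof.
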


\

\noindent
Following results are well known.

\begin{prop} \label{prop1} 

Let $T=U|T|$ be the polar decomposition of $T$ and $f$ be a continuous function on the non-negative real line which contains $\sigma(|T|)$.  For a sequence $\{ x_n \}$ of unit vectors, if $(T- r e^{i \theta}) x_n \rightarrow 0$ and $(T - r e^{i \theta})^* x_n \rightarrow 0$, then 
 $(U - e^{i\theta})x_n \rightarrow 0,  (|T| - r) x_n \rightarrow  0$ and $ (f(|T|) - f(r)) x_n \rightarrow  0$.

\end{prop}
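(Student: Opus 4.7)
The plan is to work from the hypothesis $(T-re^{i\theta})x_n\to 0$ and $(T-re^{i\theta})^*x_n\to 0$ and extract, in order, the asymptotic behaviour of $|T|^2$, of $|T|$ and $f(|T|)$, and finally of $U$, on the sequence $\{x_n\}$.

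First I would show that $(|T|^2 - r^2)x_n \to 0$. The trick is to apply the bounded operator $T^*$ to $Tx_n - re^{i\theta}x_n \to 0$, obtaining $T^*Tx_n - re^{i\theta}T^*x_n \to 0$, and then use $T^*x_n - re^{-i\theta}x_n \to 0$ (multiplied by the scalar $re^{i\theta}$) to conclude $T^*Tx_n - r^2 x_n \to 0$. Since $|T|^2 = T^*T$, this gives $(|T|^2-r^2)x_n\to 0$, which also shows in particular that $r^2 \in \sigma(|T|^2)$.

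Next, for any continuous function $g$ on a neighbourhood of $\sigma(|T|^2)$, I would deduce that $(g(|T|^2) - g(r^2))x_n \to 0$ by the standard functional-calculus approximation: choose polynomials $p_m$ converging uniformly to $g$ on $\sigma(|T|^2)$, write $p_m(|T|^2) - p_m(r^2) = (|T|^2 - r^2)q_m(|T|^2)$ for a polynomial $q_m$ (so this tends to $0$ on $x_n$ for each fixed $m$), and absorb the uniform error $\|g(|T|^2) - p_m(|T|^2)\| + |g(r^2)-p_m(r^2)|$. Applying this with $g(t) = \sqrt{t}$ yields $(|T| - r)x_n \to 0$, and applying it with $g(t) = f(\sqrt{t})$ yields $(f(|T|) - f(r))x_n \to 0$.

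Finally, for the unitary part I would combine the polar decomposition with the previous step. Writing $Tx_n = U|T|x_n = rUx_n + U(|T|-r)x_n$, the boundedness of $U$ together with $(|T|-r)x_n \to 0$ gives $Tx_n - rUx_n \to 0$. Subtracting this from the hypothesis $Tx_n - re^{i\theta}x_n \to 0$ yields $r(U - e^{i\theta})x_n \to 0$, and dividing by $r$ (in the nontrivial case $r>0$) gives the required convergence $(U - e^{i\theta})x_n \to 0$. The main obstacle is really just the $r=0$ case, which is vacuous for the $U$-statement in any essential application (the relevant phase is indeterminate when $r=0$) but is handled directly since $(|T|-0)x_n \to 0$ and $(f(|T|)-f(0))x_n \to 0$ follow immediately from the functional-calculus step; otherwise the argument is a straightforward chain of applications of the continuous functional calculus of the self-adjoint operator $|T|^2$.
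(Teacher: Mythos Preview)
The paper does not supply its own proof of this proposition; it simply refers the reader to Lemma~1.2.4 of Xia's monograph. Your argument is the standard one and is correct: from $(T-re^{i\theta})x_n\to 0$ and $(T-re^{i\theta})^*x_n\to 0$ one gets $(|T|^2-r^2)x_n\to 0$, then the continuous functional calculus gives $(|T|-r)x_n\to 0$ and $(f(|T|)-f(r))x_n\to 0$, and finally $T=U|T|$ yields $r(U-e^{i\theta})x_n\to 0$.

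Your remark about $r=0$ is well taken and worth stating explicitly rather than brushing aside: as the proposition is written, the conclusion $(U-e^{i\theta})x_n\to 0$ can genuinely fail when $r=0$ (for instance $T=0$, so $U=0$ in the polar decomposition, and $\|(U-e^{i\theta})x_n\|=1$). In the paper's applications the polar decomposition is taken with $U$ unitary and only the conclusions on $|T|$ and $f(|T|)$ are actually used at $r=0$, so nothing downstream is affected; but strictly speaking the $U$-assertion requires $r>0$.
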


\noindent
See Lemma 1.2.4 in $\cite{X1}$. 


\begin{prop}\label{prop2} 

Let $T$ be semi-hyponormal. Then $\sigma(T) = \{ \overline{z} : z \in \sigma_{a} ( T^{*} ) \}$.

\end{prop}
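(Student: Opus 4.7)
The plan is to split the equality into two inclusions. The inclusion $\{\overline{z}:z\in\sigma_a(T^*)\}\subseteq\sigma(T)$ is immediate, since $\sigma_a(T^*)\subseteq\sigma(T^*)=\overline{\sigma(T)}$ and conjugation is an involution. The substance of the proposition therefore lies in the reverse inclusion, which, once one conjugates, is equivalent to showing that $\sigma_r(T^*)=\emptyset$ whenever $T$ is semi-hyponormal.

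To establish that, I would argue by contradiction. If $\mu\in\sigma_r(T^*)$, then $T^*-\mu$ is bounded below but its range is not all of $\h$; non-surjectivity forces $\ker(T-\overline\mu)\neq\{0\}$, so there exists a unit vector $x$ with $Tx=\overline\mu\, x$. If I can show that semi-hyponormality forces $T^*x=\mu x$ as well, then $\mu\in\sigma_p(T^*)\subseteq\sigma_a(T^*)$, which directly contradicts the ``bounded below'' part of $\mu\in\sigma_r(T^*)$.

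The heart of the proof is therefore the following eigenvector transfer lemma: if $T$ is semi-hyponormal and $Tx=\lambda x$, then $T^*x=\overline\lambda\, x$. When $\lambda=0$ this is immediate, because $Tx=0$ gives $\langle|T|x,x\rangle=\|Tx\|^2=0$, and the operator inequality $|T|\geq|T^*|$ yields $\langle|T^*|x,x\rangle=0$, whence $T^*x=0$. For $\lambda\neq0$ the naive substitution fails because $T-\lambda$ need not be semi-hyponormal; I would instead pass to the Aluthge transform $\widetilde{T}=|T|^{1/2}U|T|^{1/2}$, with $T=U|T|$ the polar decomposition, which is hyponormal whenever $T$ is semi-hyponormal. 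Setting $y=|T|^{1/2}x\neq 0$, one verifies $\widetilde{T}y=\lambda y$, invokes the standard hyponormal eigenvector property to conclude $\widetilde{T}^*y=\overline\lambda\, y$, and then unwinds this identity through $T^*=|T|U^*$ together with $|T|x=\lambda U^*x$ to recover $T^*x=\overline\lambda\, x$.

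The main obstacle is precisely the non-invariance of semi-hyponormality under scalar translation in the case $\lambda\neq 0$; this is the reason the proof must pass through an auxiliary hyponormal operator rather than being a one-line consequence of $|T|\geq|T^*|$. A secondary technicality is that $U$ in the polar decomposition may only be a partial isometry, but any nonzero $\lambda$-eigenvector of $T$ automatically lies in $(\ker T)^\perp$, which is the initial space of $U$, so the manipulations above remain well-defined.
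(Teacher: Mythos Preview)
The paper does not prove this proposition at all; it simply records it and cites Theorem~1.2.6 of Xia's monograph \cite{X1}. So there is no argument in the paper to compare yours against, and your proposal is an independent proof.

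Your overall strategy---reduce to showing $\sigma_r(T^*)=\emptyset$, and derive a contradiction from an eigenvector of $T$---is correct and standard. Two small points on the execution. First, in the $\lambda=0$ case you write $\langle|T|x,x\rangle=\|Tx\|^2$; the correct identity is $\||T|x\|^2=\|Tx\|^2$, which still gives $|T|x=0$ and then $|T^*|^{1/2}x=0$ from $|T|\ge|T^*|$, so the conclusion stands. Second, in the $\lambda\neq0$ case your Aluthge step legitimately produces $\widetilde{T}^*y=\overline\lambda\,y$ with $y=|T|^{1/2}x\neq0$; left-multiplying by $|T|^{1/2}$ gives $T^*(|T|x)=\overline\lambda\,(|T|x)$, so $\overline\lambda\in\sigma_p(T^*)$. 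That already contradicts $\overline\lambda\in\sigma_r(T^*)$ and finishes the proposition. Your stronger assertion $T^*x=\overline\lambda\,x$ is in fact true, but the two ingredients you name ($T^*=|T|U^*$ and $|T|x=\lambda U^*x$) only reduce it to $|T|^2x=|\lambda|^2x$, which does not drop out of $\widetilde{T}^*y=\overline\lambda\,y$ without a further orthogonality argument (e.g., observing that $U^*|T|x-\overline\lambda x$ lies in $\ker|T|\cap(\ker|T|)^\perp$). Since the weaker eigenvector $|T|x$ already closes the loop, this is a cosmetic gap rather than a real one.
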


\noindent
See Theorem 1.2.6 in $\cite{X1}$.

\

\noindent
{\bf Remark.}
If $T$ is $p$-hyponormal  and $f(t) = t^{2p}$, then (2) holds by Theorem 4 of \cite{CH}.
 If $T$ is $\log$-hyponormal  and  $f(t) = \log \, t$, then (2) holds by  Lemma 3 of \cite{T}.
About (3), since the mapping $\circ$ of Berberian method is a faithful $*$-representation, so is $T^{\circ}$ if $T$ is $p$-hyponormal or $\log$-hyponormal, respectively. Let ${\mathcal M}$ be a reducing subspace for $T$. It is clear that if $T$ is $p$-hyponormal or $\log$-hyponormal, then so is  $T|_{{\mathcal M}}$, respectively. \\
(i) Let $T$ be $p$-hyponormal and $T= U|T|$ be the polar decomposition of $T$ and $f(t) = t^{2p}$.
 Then $S = U|T|^{2p}$ is semi-hyponormal and  $\sigma (U|T|^{2p}) = \{ r^{2p} e^{i\theta} \, : \, r e^{i \theta} \in \sigma(T) \, \}$ by Theorem 3 of \cite{CI}. Hence (3) holds by Proposition \ref{prop2}. \\
(ii)  Let $T = U \vert T \vert $ be $\log$-hyponormal and $f(t) = \log \, t$. Then $S = U \log |T|$ is semi-hyponormal and $\sigma (U \log |T|) = \{  e^{i\theta} \log \, r \, : \, r e^{i \theta} \in \sigma(T) \, \}$ by Lemma 8 of \cite{T}.  Hence (3) holds by Proposition \ref{prop2}.\\
Therefore,  if $T$ is $p$-hyponormal or $\log$-hyponormal and $f(t) = t^{2p}$ or $f(t) = \log \, t,$ respectively, then $T$ satisfies (2) and (3) for this $f$.

\bigskip
In this paper, we would like to prove the following theorem.

\begin{thm}\label{thm1} 

Let  ${\bf T} = (T_1,...,T_n)$ be a doubly commuting $n$-tuple of operators and $T_j= U_j|T_j| \ (j=1,...,n)$ be the polar decompositions. Let $f(t)$ be a continuous function on a open interval in the non-negative real line which contains $\sigma(|T_1|) \cup \cdots \cup \sigma(|T_n|)$. Let $S_j = U_j f( |T_j|) \ (j=1,...,n)$ and  ${\bf S} = (S_1,...,S_n)$. Let $T_1,..., T_n$ and $f$ satisfy $(1)$ and $(2)$. If  $(r_1 e^{i \theta_1},...,r_ne^{i \theta_n}) \in \sigma_T({\bf T}), \ \mbox{then }  \  (e^{i \theta_1} f(r_1),..., e^{i \theta_n} f(r_n))  \in \sigma_T({\bf S}) . $

\end{thm}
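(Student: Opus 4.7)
My approach is to combine Curto's criterion (Proposition \ref{PC}) with the block-diagonalization of the Koszul Laplacian $L_k := D_k^* D_k + D_{k+1} D_{k+1}^*$ that is available for doubly commuting tuples. A standard anti-commutation calculation on the exterior algebra (using $\{\iota_i,\epsilon_j\}=\delta_{ij}$ together with $(T_i-z_i)(T_j-z_j)^*=(T_j-z_j)^*(T_i-z_i)$ for $i\neq j$) shows
$$L_k(x \otimes e_J) = L_J(x) \otimes e_J, \qquad L_J := \sum_{j \in J}(T_j-z_j)^*(T_j-z_j) + \sum_{j \notin J}(T_j-z_j)(T_j-z_j)^*,$$
so $L_k=\bigoplus_{|J|=k}L_J$. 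Writing $z_j=r_je^{i\theta_j}$ and $w_j=e^{i\theta_j}f(r_j)$, it suffices to prove that whenever some $L_J$ is non-invertible, the analogous block $L_J^{\bf S}$ for ${\bf S}-{\bf w}$ is also non-invertible.

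Assume $(z_1,\ldots,z_n)\in\sigma_T({\bf T})$. Curto's criterion combined with the block decomposition produces a $J$ with $0\in\sigma_a(L_J)$. The defining summands of $L_J$ involve operators $\{T_j-z_j:j\in J\}\cup\{(T_j-z_j)^*:j\notin J\}$, which commute pairwise by double commutativity, so applying the Berberian extension to this commuting tuple yields a non-zero $x\in\mathcal K$ with $(T_j^\circ-z_j)x=0$ for $j\in J$ and $(T_j^\circ-z_j)^*x=0$ for $j\notin J$. For each $j\in J$, condition $(1)$ applied to the constant sequence $\{x\}$ gives $(T_j^\circ-z_j)^*x=0$, whereupon Proposition \ref{prop1} yields $(S_j^\circ-w_j)x=0$.

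To convert the remaining relations $(T_j^\circ-z_j)^*x=0$ for $j\notin J$ into $(S_j^\circ-w_j)^*\eta=0$ on a single vector, enumerate $\{j_1,\ldots,j_{n-k}\}:=\{1,\ldots,n\}\setminus J$, set $\eta_0:=x$, and recursively let $\mathcal M_l$ be the smallest closed subspace of $\mathcal K$ that reduces $T_{j_l}^\circ$ and contains $\eta_{l-1}$. Since $(T_{j_l}^\circ-z_{j_l})^*\eta_{l-1}=0$ places $z_{j_l}$ in $\sigma(T_{j_l}^\circ|_{\mathcal M_l})$, condition $(2)$ supplies a non-zero $\eta_l\in\mathcal M_l$ with $(S_{j_l}^\circ)^*\eta_l=\bar w_{j_l}\eta_l$. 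Double commutativity guarantees that $T_{j_l}^\circ$ and $(T_{j_l}^\circ)^*$ commute with $S_m^\circ$, $(S_m^\circ)^*$, $(S_{j_i}^\circ)^*$, $T_{j_i}^\circ$, and $(T_{j_i}^\circ)^*$ for $m\in J$ and $i\neq l$; hence the kernels $\ker(S_m^\circ-w_m)$ for $m\in J$, $\ker((S_{j_i}^\circ-w_{j_i})^*)$ for $i<l$ and $\ker((T_{j_i}^\circ-z_{j_i})^*)$ for $i>l$ are all reducing subspaces for $T_{j_l}^\circ$ that contain $\eta_{l-1}$, so they also contain $\mathcal M_l$, and $\eta_l$ inherits every earlier annihilation relation. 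After $n-k$ steps, $\eta:=\eta_{n-k}\neq 0$ satisfies the full block system $(S_m^\circ-w_m)\eta=0$ ($m\in J$) and $(S_j^\circ-w_j)^*\eta=0$ ($j\notin J$), giving $L_J^{{\bf S},\circ}\eta=0$; faithfulness of $\circ$ transports this to non-invertibility of $L_J^{\bf S}$ on $\h$, and Curto's criterion delivers $(w_1,\ldots,w_n)\in\sigma_T({\bf S})$.

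The main technical obstacle is the recursive-invariance step: verifying that the growing list of kernels remains reducing under each successive $T_{j_l}^\circ$. This hinges on the standard but nontrivial fact that, under double commutativity, the polar factors $U_m^\circ, |T_m|^\circ$ (and therefore $S_m^\circ$ and $(S_m^\circ)^*$) commute with both $T_{j_l}^\circ$ and $(T_{j_l}^\circ)^*$ for every $m\neq j_l$, which is what propagates the invariants through the iteration.
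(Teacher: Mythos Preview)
Your argument is essentially correct and reaches the goal, but it is organized rather differently from the paper. The paper first proves, by a separate induction on $n$ (Lemma~\ref{lem1} and Theorem~\ref{thm2}), that for doubly commuting tuples with property~(1) one always has $z\in\sigma_T({\bf T})\Rightarrow \bar z\in\sigma_{ja}({\bf T}^*)$; only then does it run a second induction on $n$ for the main theorem, working entirely with the uniform relations $(T_j^{\circ}-z_j)^*x^{\circ}=0$ and passing to a single reducing kernel $\ker(T_n^{\circ}-z_n)^*$ to invoke the $(n-1)$-case. You bypass the first induction by quoting the closed-form block diagonalization $L_k=\bigoplus_{|J|=k}L_J$ of the Koszul Laplacian for doubly commuting tuples, keep the $J/J^{c}$ split throughout, dispatch the $J$-indices in one stroke via Proposition~\ref{prop1}, and then iterate condition~(2) over $J^{c}$ directly rather than via an outer induction on $n$. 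Your conclusion comes from non-invertibility of the single block $L_J^{\bf S}$, whereas the paper implicitly lands at non-exactness of the Koszul complex for ${\bf S}$ at level $0$ through $\bar w\in\sigma_{ja}({\bf S}^*)$. The trade-off: your route is more direct and avoids the auxiliary Lemma~\ref{lem1}/Theorem~\ref{thm2}; the paper's route isolates Theorem~\ref{thm2} as a result of independent interest.

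One small repair is needed. You apply condition~(1) to the constant sequence $\{x\}$ with $x\in\mathcal K$, but condition~(1) is stated for sequences of unit vectors in $\h$, and it is not automatic that it transfers to $T^{\circ}$ on $\mathcal K$ (a Banach-limit zero need not come from a norm-null subsequence). The fix is simply to reorder: from $L_J x_m\to 0$ in $\h$ first apply~(1) and Proposition~\ref{prop1} on $\h$ to obtain $(S_j-w_j)x_m\to 0$ for $j\in J$ together with $(T_j-z_j)^*x_m\to 0$ for $j\notin J$, and only then pass to the Berberian extension to produce the joint eigenvector $x\in\mathcal K$. After this reordering your recursive construction with $\mathcal M_l$ and condition~(2) goes through unchanged.
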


\section{Proof of the theorem } 

First we need the following lemma.

\noindent
\begin{lem}\label{lem1} Let ${\bf T} = (T_1,...,T_n)$ be a doubly commuting $n$-tuple of  operators and $T_j $ has property $(1)$ for $ j = 1, ... ,n$. Let $\{ D_k \}$ be the chain complex of $n$-tuple ${\bf T} = (T_1,...,T_n)$. If there exists some $ k \in \{ 1, 2, \cdots , n-1 \} $ and unit vectors $x_m = \oplus_{j=1}^r x_m^j \in E_k^n(\h)$ where $r =  \binom{n}{k}$, such that $(D_k^* D_k + D_{k+1} D_{k+1}^*)x_m \ \to \ 0$ as $m \to \infty$, then there exists $s \in \{ 1, 2, \cdots, r \}$  such that $\{x_m^s\}$ is a bounded below  sequence of non-zero vectors of $\h$ satisfying $T_j^* x_m^s \ \to \ 0$ as $m \to \infty$ for $j = 1, \cdots , n$.  Thus, by taking unit vector $y_{m} = \dfrac{ x_{m}^{s} }{ \Vert  x_{m}^{s} \Vert} \in {\mathcal H}$, we have  $T_j^* y_m \ \to \ 0$ as $m \to \infty$ for $j = 1, \cdots , n$. 
\end{lem}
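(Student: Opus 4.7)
The plan is to reduce the hypothesis $(D_k^* D_k + D_{k+1} D_{k+1}^*)x_m \to 0$ to componentwise information about $T_j^* x_m^J$ by exploiting double commutativity to diagonalize the Laplacian, and then to use property $(1)$ to convert the remaining $T_j x_m^J \to 0$ statements into $T_j^* x_m^J \to 0$.

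First, since $D_k^* D_k + D_{k+1} D_{k+1}^*$ is positive, taking the inner product with $x_m$ gives
\[
\|D_k x_m\|^2 + \|D_{k+1}^* x_m\|^2 = \langle (D_k^* D_k + D_{k+1} D_{k+1}^*) x_m, x_m\rangle \longrightarrow 0,
\]
so in particular $D_k x_m \to 0$ and $D_{k+1}^* x_m \to 0$. The heart of the argument is to show that under double commutativity the Koszul Laplacian $D_k^* D_k + D_{k+1} D_{k+1}^*$ acts diagonally on the decomposition $E_k^n(\h) = \bigoplus_{|J|=k} \h$, with $J$th diagonal block
\[
\Delta_J := \sum_{j \in J} T_j^* T_j + \sum_{j \notin J} T_j T_j^*.
\]
To verify this I would compute $(D_k^* D_k\,x)^J$ and $(D_{k+1} D_{k+1}^*\,x)^J$ from the explicit Koszul formulas: the diagonal contributions read off immediately as $\sum_{j \in J} T_j^* T_j\, x^J$ and $\sum_{j \notin J} T_j T_j^*\, x^J$, while the off-diagonal contributions both pair products $T_j^* T_{j'}$ (for $j \in J$, $j' \notin J$) with the vector $x^{(J \setminus\{j\}) \cup \{j'\}}$. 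Using $T_j^* T_{j'} = T_{j'} T_j^*$ from doubly commuting ($j \ne j'$), these cross terms occur in $D_k^* D_k$ and in $D_{k+1} D_{k+1}^*$ with opposite Koszul signs and cancel. This sign bookkeeping in the Koszul complex is the main technical obstacle; a clean way to organize it is to recognize the complex of a doubly commuting tuple as the tensor product of the $n$ two-term complexes $\h \xrightarrow{T_j} \h$, for which the Laplacian at multi-degree $J$ is manifestly $\Delta_J$.

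With the diagonalization in hand, $\Delta_J x_m^J \to 0$ for every $J$; pairing with $x_m^J$ yields $\sum_{j \in J} \|T_j x_m^J\|^2 + \sum_{j \notin J} \|T_j^* x_m^J\|^2 \to 0$, hence $T_j x_m^J \to 0$ for $j \in J$ and $T_j^* x_m^J \to 0$ for $j \notin J$. Applying property $(1)$ (with $z = 0$) to each $T_j$ with $j \in J$ promotes $T_j x_m^J \to 0$ to $T_j^* x_m^J \to 0$, so $T_j^* x_m^J \to 0$ for every $j = 1,\dots,n$ and every index $J$.

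Finally, since $\sum_{|J|=k} \|x_m^J\|^2 = 1$ and there are only $r = \binom{n}{k}$ summands, a pigeonhole argument supplies some $s \in \{1,\dots,r\}$ with $\|x_m^s\| \ge 1/\sqrt{r}$ for infinitely many $m$. After passing to this subsequence, $\{x_m^s\}$ is bounded below, and $T_j^* x_m^s \to 0$ for every $j$ by the previous step. The normalized sequence $y_m = x_m^s / \|x_m^s\|$ then still satisfies $T_j^* y_m \to 0$ because the scalars $1/\|x_m^s\|$ are bounded, giving the stated conclusion.
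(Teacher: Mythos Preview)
Your argument is correct and takes a genuinely different route from the paper. The paper proceeds by induction on $n$: for $n=2$ it computes $D_1^*D_1+D_2D_2^*$ explicitly as a $2\times 2$ block-diagonal matrix, and for the inductive step it invokes Curto's block form $D_k=\begin{pmatrix}F_k & (-1)^{k+1}\mathrm{diag}(T_n)\\ 0 & F_{k-1}\end{pmatrix}$ (with $\{F_k\}$ the Koszul maps of $(T_1,\dots,T_{n-1})$), which yields a splitting $E_k^n(\h)=E_k^{n-1}(\h)\oplus E_{k-1}^{n-1}(\h)$ and reduces the Laplacian to two smaller Laplacians plus $\mathrm{diag}(T_nT_n^*)$ and $\mathrm{diag}(T_n^*T_n)$ respectively. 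You instead go straight to the closed form $\Delta_J=\sum_{j\in J}T_j^*T_j+\sum_{j\notin J}T_jT_j^*$ for every block at once, justified by the tensor-product description of the Koszul complex of a doubly commuting tuple. Your approach is more conceptual and avoids the induction entirely, at the cost of having to do (or cite) the Koszul sign cancellation in one shot; the paper's inductive approach hides that bookkeeping inside Curto's structural lemma and the $n=2$ base case.

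One small point of order: property $(1)$ is stated for sequences of \emph{unit} vectors, but you invoke it for the components $x_m^J$, which need not be unit. This is harmless---either observe that property $(1)$ for unit vectors implies the same implication for any norm-bounded sequence (split into a subsequence tending to $0$, where the conclusion is trivial, and a bounded-below subsequence, where you normalize), or simply swap the order of your last two steps: first pigeonhole to find the bounded-below component $x_m^s$, normalize to $y_m$, and only then apply property $(1)$ to the unit vectors $y_m$. The paper follows the latter ordering.
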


\begin{proof}  We show it by the mathematical induction. \\
(1) Let $n=2$. Then the chain complex of doubly commuting pair ${\bf T}= (T_1,T_2)$ is 

$$ 0 \ \stackrel{ }{\ \longrightarrow \ } \ \h \stackrel{D_2}{\ \longrightarrow \ } \ \h \oplus \h \stackrel{D_1}{\ \longrightarrow \ } \ \h \stackrel{}{\ \longrightarrow \ } \ 0. $$

\

\noindent
By the definition of the Koszul complex we have 
$$D_2 = \left( 
\begin{array}{cc}
-T_2 \\
T_1
\end{array}
\right) \ \ \mbox{and} \ \ D_1 = \left( 
\begin{array}{cc}
T_1  & T_2 
\end{array}
\right).  $$

\noindent
Since $T_1, T_2$ are doubly commuting, we have

$$ \ \ D_1^* D_1 + D_2 D_2^* =  \left( 
\begin{array}{cccc}
T_1^* T_1 + T_2 T_2^* & 0 \\
0  & T_1 T_1^* + T_2^*T_2
\end{array}
\right). $$ 

\noindent
Let $x_m = x_m^1 \oplus x_m^2 \in E_1^2(\h) \cong \h \oplus \h$ be unit vectors and 
\begin{align*} & (D_1^* D_1 + D_2 D_2^*)x_m =  \begin{pmatrix} T_1^* T_1 + T_2 T_2^* & 0 \\
0  & T_1 T_1^* + T_2^*T_2 \end{pmatrix} \begin{pmatrix} x_m^1 \\ x_m^2 \end{pmatrix} \\
 & = \begin{pmatrix} (T_1^* T_1 + T_2 T_2^*)x_m^1 \\ (T_1 T_1^* + T_2^*T_2)x_m^2 \end{pmatrix}  \ \to \ 0 \ 
 \text{as} \ \  m \to \infty. \end{align*}
 
 Since $\|x_m^1\|^2 + \|x_m^2\|^2 = 1$ for all $m$, we may assume (i) $x_{m}^{1} \not\rightarrow 0 $ 
or (ii) $x_{m}^{2} \not\rightarrow 0 $. 

We assume (i).  By taking subsequence, we may asume that there exists $ 0 < c $ that that $ 0 < c <  \Vert x_{m}^{1} \Vert \leq 1$ for all $m$, i.e., bounded below.  Then $(T_1^* T_1 + T_2 T_2^*)x_m^1\rightarrow 0$ implies $T_{1} x_{m}^{1} , T_{2}^{*} x_{m}^{1} \rightarrow 0 $ and $T_{1}^{*} x_{m}^{1} \rightarrow 0 $ by (1). 
Case (ii) is similar.
Hence the statement holds for $n=2.$ \\

(2) We assume that the statement holds for $(n-1)$-tuples of doubly commuting operators. Asuume $(D_k^* D_k + D_{k+1} D_{k+1}^*)x_m \ \to \ 0$ as $m \to \infty$ for unit vectors $x_m \in E_k^n(\h)$.

Let $\{ F_k \}$ be the chain complex of  $(n-1)$-tuple ${\bf T}' = (T_1,...,T_{n-1})$ and  $x_m = y_m \oplus z_m \in E_{k}^{n-1}(\h) \oplus E_{k-1}^{n-1}(\h) = E_{k}^{n}(\h)$. By Curto's characterization (see p.132, Curto \cite{Cu}) it holds $D_k = \left( \begin{array}{cc}
F_k & (-1)^{k+1} {\rm diag} (T_n ) \\
0 & F_{k-1}
\end{array}
\right)$. Hence $$ (D_k^* D_k + D_{k+1} D_{k+1}^*)x_m  = \begin{pmatrix} \left( F_k^* F_k + F_{k+1} F_{k+1}^* + {\rm diag} (T_n T_n^*) \right) y_{m} \\ 
\left( F_{k-1}^* F_{k-1} + F_k F_k^* + {\rm diag} (T_n^* T_n) \right) z_{m} \end{pmatrix} \rightarrow 0.$$ 
 Since $\|y_m \|^2 + \|z_m \|^2 = 1$ for all $m$, we may assume (i) $y_{m} \not\rightarrow 0 $ 
or (ii) $z_{m} \not\rightarrow 0 $. 

We assume (i).

 Then $\left( F_k^* F_k + F_{k+1} F_{k+1}^* + {\rm diag} (T_n T_n^*) \right) y_{m} \rightarrow 0$ implies $\left( F_k^* F_k + F_{k+1} F_{k+1}^* \right) y_{m} \rightarrow 0$ and $\left(  {\rm diag} (T_n T_n^*) \right) y_{m} \rightarrow 0 $. 
By taking subsequence, we may asume that there exists $ 0 < c $ that that $ 0 < c <  \Vert y_{m} \Vert \leq 1$ for all $m$. Let $v_{m} = \dfrac{y_{m}}{ \Vert y_{m} \Vert} $. Then $v_{m}$ are 
unit vectors and  $\left( F_k^* F_k + F_{k+1} F_{k+1}^* \right) v_{m} \rightarrow 0$ and $\left(  {\rm diag} (T_n T_n^*) \right) v_{m} \rightarrow 0 $.  
Let $v_{m}  = \oplus_{s=1}^{ \binom{n-1}{k} } v_{m}^{s} \in E^{n-1}_{k} ( {\mathcal H} ) $. Then there exist $s \in \left\{1, 2,  \cdots , \binom{n-1}{k} \right\} $ such that $v_{m}^{s} \in {\mathcal H}$ is a bounded below sequence of non-zero vectors and $T_{j}^{*} v_{m}^{s} \rightarrow 0 $ for $ j = 1, 2, \cdots , n-1$ and $T_{n}^{*} v_{m}^{s} \rightarrow 0$ as $ m \rightarrow \infty$. 

 Case (ii) is similar.
Hence the statement holds for $n.$ It completes the proof. 
\end{proof}

\begin{thm}\label{thm2} Let ${\bf T} = (T_1,...,T_n)$ be a doubly commuting $n$-tuple of  operators which satisfy
 that every $T_j \ (j=1,...,n)$ has property $(1)$.
 If $z=(z_1,...,z_n) \in \sigma_T({\bf T})$, then there exists unit vectors $y_m \in {\mathcal H} $ such that $(T_j -z_j)^* y_m \ \to \ 0$ as $m \to \infty$, that is, $\overline{z} = (\overline{z_1},...,\overline{z_n}) \in \sigma_{ja}({\bf T}^*)$, where ${\bf T}^*=(T_1^*,...,T_n^*)$.  
\end{thm}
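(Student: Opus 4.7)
My plan is to reduce to the case $z=0$ and then combine Curto's characterization (Proposition~1.3) with Lemma~2.1.

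First, I would observe that the hypotheses translate: if $\mathbf{T}$ is doubly commuting, then so is $\mathbf{T}-z = (T_1-z_1,\dots,T_n-z_n)$, since adding scalars preserves both commutativity and $*$-commutativity. Property $(1)$ is also stable under such shifts because it is a statement about every affine translate $T_j-w$ of $T_j$. Moreover $(T_j-z_j)^{*}y_m \to 0$ is exactly the conclusion we want, so it suffices to show that if $0\in\sigma_T(\mathbf{T})$, then there exist unit vectors $y_m\in\mathcal{H}$ with $T_j^{*}y_m\to 0$ for every $j$.

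Next, by Curto's Proposition~1.3, the assumption $0\in\sigma_T(\mathbf{T})$ means that $D_k^{*}D_k+D_{k+1}D_{k+1}^{*}$ fails to be invertible for some $k\in\{0,1,\dots,n\}$. Since this operator is a sum of positive self-adjoint operators, it is itself positive and self-adjoint, so non-invertibility forces $0\in\sigma(D_k^{*}D_k+D_{k+1}D_{k+1}^{*})=\sigma_a(D_k^{*}D_k+D_{k+1}D_{k+1}^{*})$. Hence we can choose a sequence of unit vectors $x_m\in E_k^n(\mathcal{H})$ with
\[
(D_k^{*}D_k+D_{k+1}D_{k+1}^{*})x_m \longrightarrow 0 \qquad (m\to\infty).
\]

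For $k\in\{1,\dots,n-1\}$, Lemma~2.1 applies directly: it produces an index $s$ and a bounded-below sequence of non-zero vectors $x_m^s\in\mathcal{H}$ such that $T_j^{*}x_m^s\to 0$ for every $j=1,\dots,n$; then $y_m := x_m^s/\|x_m^s\|$ is the desired sequence of unit vectors. The boundary cases $k=0$ and $k=n$ are not covered by Lemma~2.1, but they are easier. At $k=n$ one has $D_{n+1}=0$ and $D_n^{*}D_n=\sum_{j=1}^n T_j^{*}T_j$ acting on $E_n^n(\mathcal{H})\cong\mathcal{H}$, so $(D_n^{*}D_n)x_m\to 0$ yields $T_jx_m\to 0$ for each $j$, and property $(1)$ applied with $z=0$ upgrades this to $T_j^{*}x_m\to 0$. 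At $k=0$ one has $D_0=0$ and $D_1D_1^{*}=\sum_{j=1}^n T_jT_j^{*}$ acting on $E_0^n(\mathcal{H})\cong\mathcal{H}$, which directly gives $T_j^{*}x_m\to 0$.

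The only real obstacle is organizing the reduction: one must keep track of which $k$ triggers the failure of invertibility and verify that Lemma~2.1 covers the main range while the two degenerate complexes at $k=0,n$ can be settled by inspection, using property $(1)$ to convert a vanishing of $T_j$ to a vanishing of $T_j^{*}$ in the $k=n$ case. Once this is in place, the conclusion $\overline{z}\in\sigma_{ja}(\mathbf{T}^{*})$ is immediate from the definition of the joint approximate point spectrum.
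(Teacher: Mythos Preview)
Your proof is correct and follows essentially the same route as the paper's: reduce to $z=0$, invoke Curto's criterion to obtain a $k$ with $D_k^{*}D_k+D_{k+1}D_{k+1}^{*}$ not invertible, use positivity to extract an approximate null sequence, and then appeal to Lemma~2.1. You are in fact slightly more careful than the paper, which applies Lemma~2.1 without explicitly separating out the endpoint cases $k=0$ and $k=n$; your direct treatment of those two cases (via $D_1D_1^{*}=\sum T_jT_j^{*}$ and $D_n^{*}D_n=\sum T_j^{*}T_j$, the latter combined with property~(1)) fills that small gap.
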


\begin{proof} Since $z=(z_1,...,z_n) \in \sigma_T({\bf T})$, by the spectral mapping theorem of the Taylor spectrum, it holds
$$0 = (0,...,0) \in \sigma_T({\bf T} -z),$$
where ${\bf T} -z=(T_1 - z_1,...,T_n - z_n)$. Since ${\bf T} -z$ is a doubly commuting $n$-tuple of operators which satisfy that every $T_j - z_j \ (j=1,...,n)$ has property (1) and the Koszul complex $E({\bf T} - z)$ of $n$-tuple ${\bf T}-z = (T_1 - z_1,...,T_n - z_n)$ is not exact. Hence there exists $k$ such that $(D_k^* D_k + D_{k+1} D_{k+1}^*)$ is not invertible. Since the operator $D_k^* D_k + D_{k+1} D_{k+1}^*$ is positive on the space $E_k^n(\h)$, there exists a sequence $\{x_m\}$ of unit vectors of $E_k^n(\h)$ such that $\big(D_k^* D_k + D_{k+1} D_{k+1}^*\big) x_m \ \to \ 0$ as $m \to \infty$. Hence, by Lemma \ref{lem1} there exists a sequence $\{y_m\}$ of unit vectors of $\h$ such that 
$$ (T_j - z_j)^*y_m \ \to \ 0 \ \ \mbox{as} \ m \  \to \ \infty \ \ \mbox{for all} \ \ j=1,...,n.$$
It's completes the proof. 
\end{proof}

\noindent
{\it Proof} of Theorem 1.9. \\
(1)  If $n=2$, theorem holds by Theorem 2.3 of \cite{CT}. \\
(2)  We assume that the statment holds for $(n-1)$-tuple. Since $(r_1 e^{i \theta_1},...,r_ne^{i \theta_n}) \in \sigma_T({\bf T})$, by Theorem \ref{thm2} there exists a sequence $\{x_m\}$ of unit vectors of $\h$ such that 
$(T_j - r_j e^{i \theta_j})^* x_m \ \to \ 0$ as $m \to \infty$ for all $j = 1,...,n$. Consider the Berberian extension ${\mathcal K}$  of $\h$. Then there exists $0 \not = x^{\circ} \in {\mathcal K}$  such that $$ (T_j^{\circ} - r_j e^{i\theta_j})^{*}  x^{\circ} = 0 \ \  \mbox{for all} \ \ j=1,...,n.$$ 
Let $ {\mathcal M} = \ker (T_n^{\circ}  - r_n e^{i\theta_n} )^*.$ Then $ {\mathcal M} (\not = \{0\})$ is a reducing subspace for $T_1^{\circ},...,T_{n-1}^{\circ}$ and $(r_1 e^{i \theta_1},...,r_{n-1}e^{i \theta_{n-1}}) \in \sigma_T({\bf T}_{|{\mathcal M}}^{\circ'})$, where ${\bf T}_{|{\mathcal M}}^{\circ'} = (T_{1|{\mathcal M}}^{\circ},...,T_{n-1|{\mathcal M}}^{\circ})$. By the induction there exists a non-zero vector $y^{\circ} \in {\mathcal M}$ such that  
$$ (S_j^{\circ} - e^{i \theta_j}f(r_j))^* y^{\circ} = 0 \ \ \mbox{for all} \ \ j=1,...,n-1.$$
Let $ {\mathcal N} = \displaystyle \bigcap_{j=1}^{n-1} \ker (S_j^{\circ}  - e^{i\theta_j} f(r_j ))^*.$ Then $ {\mathcal N}$ is a reducing subspace for $T_n^{\circ}$. Let  ${\mathcal R} = {\mathcal M} \bigcap {\mathcal N} \not = \{0\}$. Hence $r_n e^{i\theta_n} \in \sigma(T_{n|{\mathcal R}}^{\circ})$. By property (2) there exists a non-zero vector $z^{\circ} \in {\mathcal R}$ such that $(S_{n|{\mathcal R}}^{\circ} - e^{i\theta_n} f(r_n))^* z^{\circ} = 0$. Since this $z^{\circ}$ satisfies $(S_{j|{\mathcal R}}^{\circ} - e^{i\theta_j} f(r_j))^* z^{\circ} = 0$ for all $j=1,...,n-1$, we have $(e^{i\theta_1} f(r_1),...,e^{i\theta_n} f(r_n)) \in \sigma_T({\bf S})$. This completes the proof. 
\begin{flushright}
$\Box$
\end{flushright}

\begin{cor}\label{cor1} Let ${\bf T} = (T_1,..., T_n)$ be a doubly commuting $n$-tuple of $p$-hyponormal operators \ $(0<p< 1).$  Let $U_j$ be unitary for the polar decomposition of $T_j = U_j |T_j| \ (j=1,...,n)$ 
and ${\bf S} = \left( U_{1} \vert T_{1} \vert^{2p},...,  U_{n} \vert T_{n} \vert^{2p} \right)$. Then  
$$\sigma_T({\bf S}) \ = \ \{ (r_1^{2p} e^{i\theta_1},..., r_n^{2p} e^{i\theta_n}) \, : \, (r_1 e^{i\theta_1}, ...,r_n e^{i\theta_n}) \in \sigma_T({\bf T}) \, \}. $$
\end{cor}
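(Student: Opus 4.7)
Let $F\colon (r_1e^{i\theta_1},\dots,r_ne^{i\theta_n})\mapsto (r_1^{2p}e^{i\theta_1},\dots,r_n^{2p}e^{i\theta_n})$ denote the coordinate-wise map appearing in the statement. The plan is to prove the two inclusions $F(\sigma_T({\bf T}))\subseteq\sigma_T({\bf S})$ and $\sigma_T({\bf S})\subseteq F(\sigma_T({\bf T}))$ separately.

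The first inclusion is a direct application of Theorem 1.9 with $f(t)=t^{2p}$: by the Remark following Proposition 1.7, every $p$-hyponormal operator $T_j$ satisfies conditions $(1)$ and $(2)$ for this choice of $f$, and the image $(e^{i\theta_1}f(r_1),\dots,e^{i\theta_n}f(r_n))$ of a point of $\sigma_T({\bf T})$ coincides with $F(r_1e^{i\theta_1},\dots,r_ne^{i\theta_n})$.

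For the reverse inclusion I would first verify that ${\bf S}$ is itself a doubly commuting tuple of semi-hyponormal operators. Double commutativity of ${\bf T}$ means that for $i\neq j$ the operator $T_i$ commutes with both $T_j$ and $T_j^*$, hence with $|T_j|^2=T_j^*T_j$, and therefore with $|T_j|^{2p}$ via the continuous functional calculus; using the unitarity of $U_j$ one then extracts that $T_i$ also commutes with $U_j$. This shows that $T_i$ and $T_i^*$ commute with $S_j=U_j|T_j|^{2p}$, and the analogous statement with the roles of $i,j$ swapped gives double commutativity of ${\bf S}$. By Remark (i) each $S_j$ is semi-hyponormal, hence in particular satisfies property $(1)$.

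Now fix $(w_1,\dots,w_n)\in\sigma_T({\bf S})$ and write $w_j=s_je^{i\phi_j}$ with $s_j\geq 0$. Theorem 1.11 applied to ${\bf S}$ provides unit vectors $y_m\in{\mathcal H}$ with $(S_j-w_j)^*y_m\to 0$ for each $j$, and property $(1)$ upgrades this to $(S_j-w_j)y_m\to 0$. Since $U_j$ is unitary, $S_j=U_j|T_j|^{2p}$ is the polar decomposition of $S_j$ with $|S_j|=|T_j|^{2p}$, so Proposition 1.7 applied with the continuous function $g(t)=t^{1/(2p)}$ gives $(U_j-e^{i\phi_j})y_m\to 0$ and $(|T_j|-s_j^{1/(2p)})y_m\to 0$, hence $(T_j-s_j^{1/(2p)}e^{i\phi_j})y_m\to 0$. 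Setting $r_j=s_j^{1/(2p)}$ and $\theta_j=\phi_j$ we conclude $(r_1e^{i\theta_1},\dots,r_ne^{i\theta_n})\in\sigma_{ja}({\bf T})\subseteq\sigma_T({\bf T})$, and this point is sent by $F$ to $(w_1,\dots,w_n)$. The main obstacle is the double-commutativity verification for ${\bf S}$, which crucially uses the unitarity of the $U_j$ to handle the polar factors; the remainder of the argument is a clean transfer through Proposition 1.7 combined with Theorem 1.11 and the standard inclusion $\sigma_{ja}\subseteq\sigma_T$.
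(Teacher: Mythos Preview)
Your forward inclusion $F(\sigma_T({\bf T}))\subseteq\sigma_T({\bf S})$ matches the paper exactly, and your explicit verification that ${\bf S}$ is again a doubly commuting tuple of semi-hyponormal operators is correct (the paper simply asserts this).

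For the reverse inclusion the paper does something shorter: it applies Theorem~1.9 a second time, now to ${\bf S}$ with $g(t)=t^{1/(2p)}$, noting that the semi-hyponormal $S_j$ together with $g$ again satisfy conditions (1) and (2). Since $U_j\,g(|S_j|)=U_j|T_j|=T_j$, this yields the inclusion $\sigma_T({\bf S})\subseteq F(\sigma_T({\bf T}))$ directly.

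Your alternative route via Theorem~2.2 and Proposition~1.7 contains a genuine gap. Property~(1) for $S_j$ is the implication
\[
(S_j-w_j)\,y_m\to 0 \ \Longrightarrow\ (S_j-w_j)^*y_m\to 0,
\]
\emph{not} its converse. From Theorem~2.2 you only obtain $(S_j-w_j)^*y_m\to 0$; to feed Proposition~1.7 you would also need $(S_j-w_j)\,y_m\to 0$. Semi-hyponormality of $S_j$ does not provide that reverse implication---it would amount to property~(1) for the co-semi-hyponormal operator $S_j^*$, which fails in general (think of a non-normal hyponormal operator: eigenvectors of $T^*$ need not be eigenvectors of $T$). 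This is precisely the obstruction that property~(2) is built to bypass in the proof of Theorem~1.9: after passing to the Berberian extension and a suitable reducing subspace, property~(2) produces a point of $\sigma_p\big((U_j^\circ\,g(|S_j|^\circ))^*\big)=\sigma_p((T_j^\circ)^*)$ directly, without ever needing the reverse of property~(1). So either invoke Theorem~1.9 for ${\bf S}$ and $g$ as the paper does, or replace your ``upgrade'' step by an argument that actually uses property~(2).
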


\begin{proof} Let $f(t) = t^{2p}$ on the non-negative real line. Since ${\bf T}$ is a doubly commuting  $n$-tuple  of $p$-hyponormal operators and $f(t) = t^{2p}$, $T_1,...,T_n$ and $f$ satisfy  (2) and (3).  Hence, by Theorem \ref{thm1} we have 
$$\sigma_T({\bf S}) \ \supset \ \{ (r_1^{2p} e^{i\theta_1},..., r_n^{2p} e^{i\theta_n}) \, : \, (r_1 e^{i\theta_1},...,r_n e^{i\theta_n}) \in \sigma_T({\bf T}) \, \}. $$
Conversely, put  $g(t) = t^{\frac{1}{2p}}$ on the non-negative real line. Since ${\bf S}$ is a doubly commuting pair of semi-hyponormal operators, $S_1, S_2$ and $g$ satisfy (2) and (3). Then  we have the converse inclusion by Theorem \ref{thm1} and similar argument. 
\end{proof}

\begin{cor} Let ${\bf T} = (T_1,..., T_n)$ be a doubly commuting  $n$-tuple  of $\log$-hyponormal operators with $ \log \vert T_{j} \vert > 0$.  Let $U_j$ be unitary for the polar decomposition of $T_j = U_j |T_j| \ (j=1,...,n)$ 
and ${\bf S} = \left( U_{1} \log \vert T_{1} \vert, ..., U_{n}  \log \vert T_{n} \vert \right)$.   Then 
$$\sigma_T({\bf S}) = \{ e^{i\theta_1} \log r_1,...,  e^{i\theta_n} \log r_n) \, : \, (r_1 e^{i\theta_1},..., r_n e^{i\theta_n}) \in \sigma_T({\bf T}) \, \}. $$
\end{cor}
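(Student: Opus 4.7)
The plan is to mirror the proof of Corollary \ref{cor1}, replacing $t \mapsto t^{2p}$ by $f(t) = \log t$ and its inverse $g(t) = e^t$. Since each $T_j$ is $\log$-hyponormal it is invertible, so each $\sigma(|T_j|)$ is a compact subset of $(0, \infty)$ and $\log$ is continuous on an open interval in $(0, \infty)$ containing the union. By the Remark, $\log$-hyponormality of $T_j$ together with $f = \log$ secures hypotheses $(1)$ and $(2)$ of Theorem \ref{thm1} (via Lemma 3 of \cite{T} and the fact that $U_j \log|T_j|$ is semi-hyponormal with known spectrum). Theorem \ref{thm1} then yields the forward inclusion
\[
\sigma_T({\bf S}) \supset \{ (e^{i\theta_1}\log r_1, \ldots, e^{i\theta_n}\log r_n) : (r_1 e^{i\theta_1}, \ldots, r_n e^{i\theta_n}) \in \sigma_T({\bf T}) \}.
\]

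For the reverse direction, I first observe that ${\bf S}$ is a doubly commuting $n$-tuple of semi-hyponormal operators. Since $\log|T_j| > 0$, the factorization $S_j = U_j \log|T_j|$ is the polar decomposition; hence $|S_j| = \log|T_j|$ and $|S_j^*| = U_j \log|T_j| U_j^* = \log|T_j^*|$, so the semi-hyponormality $|S_j| \geq |S_j^*|$ is exactly $\log|T_j| \geq \log|T_j^*|$. Double commutation passes from ${\bf T}$ to ${\bf S}$ because a Fuglede--Putnam argument yields commutation of each unitary $U_j$ with every modulus $|T_k|$, and this transfers through the continuous functional calculus.

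Finally, I would apply Theorem \ref{thm1} a second time, now to the $n$-tuple ${\bf S}$ paired with $g(t) = e^t$ on an open interval containing each $\sigma(|S_j|) = \log \sigma(|T_j|)$. The identity $U_j g(|S_j|) = U_j \exp(\log|T_j|) = U_j |T_j| = T_j$ lets the verification of $(1)$ and $(2)$ for the pair $(S_j, g)$ proceed exactly as the ``converse'' step of Corollary \ref{cor1} with $t^{1/(2p)}$ replaced by $e^t$. Theorem \ref{thm1} then tells us that whenever $(\rho_1 e^{i\varphi_1}, \ldots, \rho_n e^{i\varphi_n}) \in \sigma_T({\bf S})$ we have $(e^{\rho_1} e^{i\varphi_1}, \ldots, e^{\rho_n} e^{i\varphi_n}) \in \sigma_T({\bf T})$; setting $r_j = e^{\rho_j}$ and $\theta_j = \varphi_j$ gives $\rho_j e^{i\varphi_j} = e^{i\theta_j}\log r_j$, which is the reverse inclusion.

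I expect the main obstacle to be the careful verification that semi-hyponormal $S_j$ paired with $g = \exp$ satisfies $(1)$ and $(2)$: the Remark treats only the pairings $(p$-hyponormal$,\, t^{2p})$ and $(\log$-hyponormal$,\, \log)$, and extending the same Berberian-extension framework to the pair (semi-hyponormal$,\, \exp$) requires an analogue of Lemma 3 of \cite{T} or Theorem 4 of \cite{CH}. This is presumably standard but must be spelled out, or else justified by the same ``similar argument'' shorthand already invoked in Corollary \ref{cor1}.
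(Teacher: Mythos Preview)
Your proposal is correct and follows essentially the same route as the paper: apply Theorem \ref{thm1} with $f(t)=\log t$ for one inclusion, then note that ${\bf S}$ is a doubly commuting $n$-tuple of semi-hyponormal operators and apply Theorem \ref{thm1} again with $g(t)=e^t$ for the reverse inclusion. The paper's proof is in fact terser than yours---it simply asserts that $S_1,\ldots,S_n$ and $g$ satisfy the needed conditions and invokes ``similar argument,'' exactly the shorthand you anticipated at the end.
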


\begin{proof} Let $f(t) = \log t$ on $(0, \infty)$. Since ${\bf T}$ is a doubly commuting  $n$-tuple  of $\log$-hyponormal operators and $f(t) = \log t$, $T_1,...,T_n$ and $f$ satisfy $(2)$ and $(3)$. So by Theorem \ref{thm1} we have 
$$\sigma_T({\bf S})  \ \supset \ \{ e^{i\theta_1} \log r_1, ..., e^{i\theta_n} \log r_n) \, : \, 
(r_1 e^{i\theta_1},..., r_n e^{i\theta_n}) \in \sigma_T({\bf T}) \, \}. $$
Conversely, let  $g(t) = e^t $ on the non-negative real line. Since ${\bf S}$ is a doubly commuting  $n$-tuple of semi-hyponormal operators, $S_1,...,S_n$ and $g$ satisfy (2) and (3). Hence, we have the converse inclusion by similar argument. 
\end{proof}

\noindent
{\bf Acknowledgment.} This research is partially supported by RIMS.

\

\vspace{3mm}

\noindent
Muneo Ch\=o

\noindent
15-3-1113, Tsutsui-machi Yahatanishi-ku, Kita-kyushu 806-0032, Japan

\noindent
e-mail: muneocho0105@gmail.com
\vspace{3mm}

\noindent
B. Na\v cevska Nastovska

\noindent
Faculty of Electrical Engineering and Information Technologies, ''Ss Cyril and Methodius`` University in Skopje, Macedonia

\noindent
e-mail: bibanmath@gmail.com
\vspace{3mm}

\noindent
K\^otar\^o Tanahashi

\noindent
Department of Mathematics, Tohoku Medical and Pharmaceutical University, Sendai 981-8558, Japan

\noindent
e-mail: tanahasi@tohoku-mpu.ac.jp

\end{document}